\documentclass[12pt,reqno,oneside]{amsart}

\usepackage[T1]{fontenc}
\usepackage[utf8]{inputenc}
\usepackage{lmodern}
\usepackage{microtype}
\usepackage[a4paper,margin=2.8cm,includeheadfoot]{geometry}
\usepackage{amsmath,amssymb,mathtools}
\usepackage{tikz}
\usepackage{lineno}
\usepackage[hidelinks]{hyperref}
\usepackage{aliascnt}
\usepackage[nameinlink,noabbrev]{cleveref}
\usepackage{enumitem}

\allowdisplaybreaks

\newtheorem{theorem}{Theorem}[section]
\newaliascnt{proposition}{theorem}

\aliascntresetthe{proposition}
\newaliascnt{lemma}{theorem}
\newtheorem{lemma}[lemma]{Lemma}
\aliascntresetthe{lemma}
\newaliascnt{corollary}{theorem}

\aliascntresetthe{corollary}
\newtheorem{claim}{Claim}[theorem]
\theoremstyle{definition}
\newaliascnt{definition}{theorem}
\newtheorem{definition}[definition]{Definition}
\aliascntresetthe{definition}
\theoremstyle{remark}
\newaliascnt{remark}{theorem}

\aliascntresetthe{remark}

\crefname{theorem}{theorem}{theorems}
\crefname{proposition}{proposition}{propositions}
\crefname{lemma}{lemma}{lemmas}
\crefname{claim}{claim}{claims}
\crefname{corollary}{corollary}{corollaries}
\crefname{definition}{definition}{definitions}
\crefname{remark}{remark}{remarks}

\newcommand{\cB}{\mathcal B}
\newcommand{\cC}{\mathcal C}
\newcommand{\one}{\mathbf 1}
\newcommand{\VC}{\operatorname{VCdim}}

\newcommand{\addN}{\operatorname{add}(\mathcal N)}

\title[A Non-PAC Consistent Learner in ZFC]
{A Borel Concept Class of VC Dimension One with a Non-PAC Consistent Learner in ZFC}

\author{Mateus Jesus de Arruda Campos}
\address{Instituto de Matem\'atica, Estat\'istica e Ci\^encia da Computa\c{c}\~ao,
Universidade de S\~ao Paulo, Rua do Mat\~ao 1010,
05508-090 S\~ao Paulo, SP, Brazil}
\email{mateusjesus@usp.br}

\author{Gabriel Fernandes}
\address{Instituto de Ci\^encias Matem\'aticas e de Computa\c{c}\~ao,
Universidade de S\~ao Paulo, Avenida Trabalhador S\~ao-carlense 400,
13566-590 S\~ao Carlos, SP, Brazil}
\email{fernandes@icmc.usp.br}

\author{Vinicius de Oliveira Rodrigues}
\address{Instituto de Matem\'atica, Estat\'istica e Ci\^encia da Computa\c{c}\~ao,
Universidade de S\~ao Paulo, Rua do Mat\~ao 1010,
05508-090 S\~ao Paulo, SP, Brazil}
\email{vinior@ime.usp.br}

\subjclass[2020]{Primary 68Q32, 28A05; Secondary 03E17}
\keywords{PAC learning, VC dimension, consistent learning, null ideal, measurability, ZFC}

\date{}

\begin{document}

\begin{abstract}
The fundamental theorem of statistical learning states that, under suitable measurability assumptions, finite Vapnik--Chervonenkis (VC) dimension guarantees that every proper consistent learning rule is probably approximately correct (PAC). Blumer, Ehrenfeucht, Haussler, and Warmuth showed, assuming the Continuum Hypothesis, that the ``well-behavedness'' condition of the concept class cannot be omitted: they constructed a concept class of Borel sets of VC dimension one admitting a consistent learning rule that is not PAC. We show that the Continuum Hypothesis is unnecessary. Working in Zermelo--Fraenkel set theory with the Axiom of Choice (ZFC) alone, we construct a concept class of Borel sets on $[0,1]$ of VC dimension one and a proper consistent learning rule that is not PAC. More precisely, for a suitable Borel probability measure and target concept, the rule has true risk one at every sample size on a set of samples of outer probability one. Consequently, finite VC dimension and Borel measurability of the individual concepts do not suffice to guarantee that every proper consistent learning rule is PAC. The result shows, with no need of extra set-theoretical assumptions, that the additional regularity assumption in the fundamental theorem cannot in general be omitted.
\end{abstract}

\maketitle

\section{Introduction}

The fundamental theorem of statistical learning is a cornerstone in the theory of machine learning.
Under suitable measurability assumptions, it establishes a connection between the
learnability of a concept class and its Vapnik--Chervonenkis (VC) dimension.
Following \cite{BEHW}, under the ``well-behavedness''
condition, finite VC dimension of a Borel class of concepts implies that any consistent learning rule is PAC for the given class.
Building on \cite{DurstDudley}, they show that the hypothesis of well-behavedness cannot be removed if one assumes an extra set-theoretic hypothesis, namely, the Continuum Hypothesis (CH). Under CH, they construct an example of a Borel concept class of VC dimension one and a consistent learning rule that is not PAC.

Their example left open whether the extra set-theoretic
hypothesis is necessary. Pestov \cite{Pestov2011} subsequently studied the same issue, attaining partial results that assumed Martin's Axiom, an additional set-theoretic hypothesis which is weaker than CH and is also independent of ZFC.

In this paper, we settle this question by removing the need for the Continuum Hypothesis or other additional set-theoretic hypotheses: we construct, in the standard Zermelo--Fraenkel set theory with the Axiom of Choice (ZFC), a concept class of Borel sets of VC dimension one and a consistent learning rule that is not PAC.

\begin{theorem}\label{thm:mainDisplay}
  There exists (in ZFC alone) a concept class $\cC$ of Borel sets on the interval $[0, 1]$ with VC dimension one and a consistent learning rule for $\cC$ which is not PAC.
\end{theorem}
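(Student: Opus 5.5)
We follow the strategy of Blumer--Ehrenfeucht--Haussler--Warmuth: build a chain of co-null Borel concepts, take the target to be $\emptyset$, and let the rule output a concept that is consistent on the sample but has measure one. The role that CH played there, namely supplying an $\omega_1$-chain of countable sets, will be played by the additivity number $\addN$, which exists in ZFC.

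\emph{Step 1 (an increasing chain of Borel null sets with non-null union).} Let $\kappa=\addN$ and fix Borel Lebesgue-null sets $B_\xi\subseteq[0,1]$ ($\xi<\kappa$) whose union is not null; such a family exists by the definition of $\addN$. By recursion on $\alpha<\kappa$, choose a Borel null set
\[
N_\alpha\supseteq \bigcup_{\xi<\alpha}B_\xi\cup\bigcup_{\beta<\alpha}N_\beta .
\]
This is possible because a union of fewer than $\addN$ null sets is null, and every null set has a Borel null superset. Then $(N_\alpha)_{\alpha<\kappa}$ is $\subseteq$-increasing, and $U=\bigcup_\alpha N_\alpha\supseteq\bigcup_\xi B_\xi$ has positive outer measure. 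Since the chain is increasing, every finite subset of $U$ lies in a single $N_\alpha$.

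\emph{Step 2 (class, measure, VC dimension).} Put $C_\alpha=[0,1]\setminus N_\alpha$ and $\cC=\{\emptyset\}\cup\{C_\alpha:\alpha<\kappa\}$. These are Borel sets, and $\cC$ is a chain under inclusion, so no two-point set is shattered and $\VC(\cC)=1$. (It is at least one because each $C_\alpha$ is nonempty.)

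Let $H\supseteq U$ be a Borel measurable hull of $U$ for Lebesgue measure $\lambda$, and define $\mu(B)=\lambda(B\cap H)/\lambda(H)$ for Borel $B$. Then $\mu$ is a Borel probability measure, $U$ has $\mu$-outer measure one, and $\mu(C_\alpha)=1$ for every $\alpha$, since each $N_\alpha$ is $\lambda$-null.

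\emph{Step 3 (the rule).} Using choice, fix for each labelled sample that is consistent with some member of $\cC$ a consistent concept. Then override this choice on all-negative samples $((x_1,0),\dots,(x_m,0))$ with every $x_i\in U$: on such samples the rule outputs $C_\alpha$ for the least $\alpha$ with $\{x_1,\dots,x_m\}\subseteq N_\alpha$. This output is consistent, because $C_\alpha\cap N_\alpha=\emptyset$, and it is proper. Now take the target $\emptyset$ and the distribution $\mu$. On every sample in $U^m$ the rule outputs some $C_\alpha$, whose true risk is $\mu(C_\alpha\triangle\emptyset)=1$. So the rule cannot be PAC provided $U^m$ has $\mu^m$-outer measure one for every $m$.

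\emph{Main obstacle.} The key remaining step, and the one I expect to be delicate, is this product claim: if $U$ has full outer measure for $\mu$, then $U^m$ has full outer measure for $\mu^m$. I would prove it by induction on $m$. Let $E\supseteq U^m$ be Borel and suppose, towards a contradiction, that $\mu^m(E)<1$. By Fubini, the set of $x$ whose section $E_x$ has $\mu^{m-1}$-measure less than one is a Borel set of positive measure. That set therefore meets $U$, since $U$ has full outer measure. For any $x$ in the intersection we have $E_x\supseteq U^{m-1}$, so the induction hypothesis gives $\mu^{m-1}(E_x)=1$, a contradiction.

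Some care is needed with the meaning of ``not PAC'' for a non-measurable event. I would phrase it as follows: for every $m$, the set of samples on which the risk exceeds $\varepsilon$ has outer probability one, so it is not contained in any event of probability less than one. This is precisely the failure asserted in the abstract.
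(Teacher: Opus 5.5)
Your proof is correct and is essentially the paper's argument in complemented form. The paper uses the same $\addN$-chain of Borel null sets, the same normalized-hull measure $\mu$, and the same Tonelli induction showing $(\mu^{\otimes n})^*(Y^n)=1$. The difference is that the paper takes the null sets $C_\alpha$ themselves as concepts (together with $\varnothing$ and $I$) and the target to be $I$, so that on all-positive samples from $Y$ the rule outputs a $\mu$-null $C_{\rho(P)}$ with risk one. Your version, with co-null concepts $[0,1]\setminus N_\alpha$, target $\varnothing$, and an override on all-negative samples from $U$, is the mirror image of this and works equally well.
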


More precisely, we construct a Borel probability measure and a Borel class of concepts such that, for a suitable target concept, the rule has true risk one at every sample size on a set of samples of outer probability $1$.

This result shows, without the need of extra set-theoretic assumptions, that the fundamental theorem of statistical learning from \cite{BEHW} cannot be extended to all concept classes of Borel sets merely by dropping the well-behavedness hypothesis.
The theorem demonstrates that the passage from finite VC dimension to consistent PAC learnability is not purely combinatorial.
It delineates a mathematical boundary in the foundations of statistical learning by separating what follows from VC dimension alone from what requires measure-theoretic regularity.

\section{Preliminaries}
\label{sec:preliminaries}

A Polish space is a separable completely metrizable topological space, such as $\mathbb R^n$.
A \emph{concept} is a subset of $X$, and a \emph{concept class} $\cC$ is a collection of concepts.

\begin{definition}[{\cite{VapnikChervonenkis}}]
  Let $X$ be a Polish space and let $\cC$ be a concept class on $X$.
  For a finite set $F\subseteq X$, the trace of $\cC$ on $F$ is $\cC\mathbin{\upharpoonright}F
  =\{H\cap F:H\in\cC\}$.

  A finite set $F$ is \emph{shattered} by $\cC$ if
  $\cC\mathbin{\upharpoonright}F=\mathcal P(F)$.  The \emph{VC dimension} of
  $\cC$ is defined as
  \[
  \VC(\cC)
  =\sup\bigl\{|F|:F\subseteq X\text{ is finite and shattered by }\cC\bigr\}.
  \]

  If the supremum is infinite, we write $\VC(\cC)=\infty$.
\end{definition}

Blumer, Ehrenfeucht, Haussler, and Warmuth
showed, under suitable measurability assumptions, that the VC dimension
characterizes distribution-free binary PAC learnability
\cite[Theorems~2.1(i) and~A2.2]{BEHW}.  Their characterization is the basis
of what is now known as the fundamental
theorem of statistical learning.

If $X$ denotes a topological space, then $\cB(X)$ denotes its
Borel $\sigma$-algebra, that is, the $\sigma$-algebra generated by the open sets of $X$.
A Borel probability measure on $X$ is a probability
measure $\mu$ defined on $\cB(X)$.
If $\mu$ is a Borel probability measure on $X$, we
continue writing $\mu$ for the unique extension to
its completion.
We say that a concept class $\cC$ is Borel if every concept in $\cC$ is a Borel set.

For $m\geq 1$, let $\mu^{\otimes m}$ denote the product probability measure on
$(X^m,\cB(X^m))$.  Assuming that $X$ is Polish,
$\cB(X^m)=\cB(X)^{\otimes m}$.
An element of $X^m$ is called a \emph{sample} of size $m$, or an
\emph{$m$-sample}.
A labeled $m$-sample is an element of $(X\times\{0,1\})^m$.

Given $A\subseteq X$, we denote by $\one_A$ its
\emph{characteristic function}.
For a concept $C\in\cC$ and $s=(x_1,\ldots,x_n)\in X^n$, the $C$-labeled sample
generated by $s$ is
\[
S_C(s)=\bigl((x_1,\one_C(x_1)),\ldots,
              (x_n,\one_C(x_n))\bigr).
\]

We work in the realizable setting: an unknown target concept
$C\in\cC$ labels each $x\in X$ by $\one_C(x)$. Given a finite
$C$-labeled sample, the learner aims to output a hypothesis
$H\in\cC$ with small error relative to $C$.

Formally, a \emph{learning rule for $\cC$}, or a \emph{learner} for $\cC$, is a sequence of maps
\[
L_n:(X\times\{0,1\})^n\longrightarrow\cC \qquad (n\geq1).
\]
Thus, all our learners are proper.
The learner is said to be \emph{consistent} if for every $C\in\cC$, for every $n\geq1$ and every sample $s\in X^n$,
\[
  x_i\in L_n(S_C(s))
  \quad\Longleftrightarrow\quad
  x_i\in C
  \qquad (i=1,\ldots,n).
\]

\begin{definition}[True risk]
\label{def:true-risk}
  Let $X$ be a Polish space, let $\cC$ be a Borel concept
  class on $X$, and let $L=(L_n)_{n\geq1}$ be a learning rule for $\cC$.
  The \emph{true risk} of $L$ with respect to $C\in\cC$, a Borel probability measure $\mu$ on $X$, and $n\geq1$ is the function $r_{C,\mu,n}:X^n\to[0,1]$ defined by
  \[
    r_{C,\mu,n}(s)
      =\mu\bigl(L_n(S_C(s))\mathbin\triangle C\bigr),
  \]
  where $s\in X^n$ and $\mathbin\triangle$ denotes the symmetric difference
  of sets.
  For each $\varepsilon>0$, $n\geq1$, $C\in\cC$, and Borel
  probability measure $\mu$ on $X$, the \emph{bad-sample event} at accuracy
  $\varepsilon$ is
  \[
    B_{C,\mu,n}(\varepsilon)
    =\{s\in X^n:r_{C,\mu,n}(s)>\varepsilon\}.
  \]
\end{definition}
Thus, $r_{C,\mu,n}(s)$ is the probability that
$L_n(S_C(s))$ disagrees with the target $C$ on an independent
point $x\sim\mu$.

\begin{definition}\label{def:pac}
Let $X$ be a Polish space, let $\cC$ be a Borel concept
class on $X$, and let $L=(L_n)_{n\geq1}$ be a learning rule for $\cC$.
The rule $L$ is \emph{probably approximately correct (PAC)} for $\cC$ if, for every
$\varepsilon,\delta>0$, there exists $N\geq 1$ such that, for every $n\geq N$,
every $C\in\cC$, and every Borel probability measure $\mu$ on $X$,
\[
 (\mu^{\otimes n})^*\left(B_{C,\mu,n}(\varepsilon)\right)\leq\delta,
\]
where $(\mu^{\otimes n})^*$ denotes the outer measure induced by $\mu^{\otimes n}$.

We say that $\cC$ is \emph{PAC learnable} if there exists a PAC learning rule for $\cC$.

We say that $\cC$ is \emph{consistently PAC learnable} if every consistent learning rule for $\cC$ is PAC.
\end{definition}

We now state the parts of the fundamental theorem of statistical learning relevant to our discussion.

\begin{theorem}[{\cite[Theorem~2.1]{BEHW}}]
\label{thm:fundamental-behw}
Let $X$ be a Polish space and let $\cC$ be a Borel concept class.
\begin{enumerate}[label=\textup{(\roman*)}]
  \item If $\VC(\cC)=\infty$ and $\cC$ is non-trivial, then $\cC$ is not PAC learnable.
  \item If $\VC(\cC)<\infty$ and $\cC$ is well-behaved, then $\cC$ is consistently PAC learnable.
\end{enumerate}
\end{theorem}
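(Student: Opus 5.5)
Since \cref{thm:fundamental-behw} is quoted from \cite{BEHW}, I would reconstruct their argument rather than derive it from anything earlier in this paper. The two parts need different techniques: part (i) is a lower bound obtained by averaging over targets, and part (ii) is an upper bound obtained by the classical double-sample (symmetrization) argument combined with the Sauer--Shelah lemma. Only part (ii) uses well-behavedness. I would take that notion, as in \cite{BEHW}, to mean that the bad events appearing below are $\mu^{\otimes m}$-measurable. An example is the set of samples $s$ for which some $H\in\cC$ is consistent with $S_C(s)$ and has $\mu(H\triangle C)>\varepsilon$.

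\textbf{Part (i).} Fix any learner $L$ and any $n$. Since $\VC(\cC)=\infty$, choose a shattered set $F$ with $|F|=2n$, and let $\mu$ be the uniform measure on $F$; this is a Borel probability measure because $F$ is finite. For every $T\subseteq F$, pick a target $C_T\in\cC$ with $C_T\cap F=T$. Now average over $T$ chosen uniformly at random. For a fixed sample $s\in F^n$, at least $n$ points of $F$ are missing from $s$. On each missing point, the label assigned by $L_n(S_{C_T}(s))$ is independent of $T$'s label there. So the expected risk is at least $\tfrac12\cdot\tfrac{n}{2n}=\tfrac14$. Consequently some $T$ has $\mathbb E_s[r_{C_T,\mu,n}]\ge \tfrac14$. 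Because the risk is bounded by $1$, this forces $\mu^{\otimes n}(r>\tfrac18)\ge\tfrac1{7}$, and everything here is measurable since $\mu$ is finitely supported. Taking $\varepsilon=\tfrac18$ and $\delta<\tfrac17$ violates \cref{def:pac} for every $n$. The non-triviality hypothesis is used only to exclude degenerate classes in the precise statement of \cite{BEHW}; it is not otherwise needed here.

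\textbf{Part (ii).} Let $d=\VC(\cC)$, and fix $C$, $\mu$ and $\varepsilon$. Every consistent rule outputs a hypothesis consistent with the sample. It therefore suffices to bound the probability of the event
\[
  A=\{s\in X^m:\exists H\in\cC,\ H \text{ consistent with } S_C(s),\ \mu(H\triangle C)>\varepsilon\}.
\]
By well-behavedness, $A$ and the analogous event on $X^{2m}$ are measurable, so Fubini's theorem applies. The steps, in order, are as follows. First, introduce a ghost sample $s'$ and the event $J$ that some $H$ is consistent with $s$ but errs on at least $\varepsilon m/2$ points of $s'$. By Chebyshev's inequality, for $m\ge 8/\varepsilon$ we get $\mu^{\otimes m}(A)\le 2\,\mu^{\otimes 2m}(J)$. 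Second, condition on the unordered $2m$-sample and average over random swaps $s_i\leftrightarrow s'_i$. On the double sample, the relevant hypotheses are determined by their traces, of which there are at most $\sum_{i\le d}\binom{2m}{i}$ by Sauer--Shelah. For each fixed trace, the probability that all $\ge\varepsilon m/2$ errors land in $s'$ is at most $2^{-\varepsilon m/2}$. This gives
\[
  \mu^{\otimes m}(A)\le 2\Bigl(\tfrac{2em}{d}\Bigr)^d 2^{-\varepsilon m/2},
\]
which is at most $\delta$ for $m=O\bigl(\tfrac1\varepsilon(d\log\tfrac1\varepsilon+\log\tfrac1\delta)\bigr)$. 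This bound is uniform in $C$ and $\mu$, as \cref{def:pac} requires.

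The main obstacle is the measurability bookkeeping in part (ii). The symmetrization step needs Fubini's theorem on $X^{2m}$, and the swap-averaging needs the conditional probabilities to be well defined. This is exactly where well-behavedness is consumed, and it is the gap that the present paper's construction exploits. I would state the measurability hypothesis explicitly as the measurability of $A$ and of $J$ for every $m$, $C$, $\mu$ and $\varepsilon$. With that in place, the combinatorial steps are standard.
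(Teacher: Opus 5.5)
The paper does not prove this statement; it is imported from \cite{BEHW}. Your proposal is a correct reconstruction of the standard argument behind it: averaging over targets on a shattered $2n$-point set under the uniform measure for (i), where finite support makes every set measurable so no regularity is needed, and the ghost-sample/swap argument with Sauer--Shelah for (ii), where well-behavedness enters only through measurability of the bad events.

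Two minor calibrations. First, you can add the clause $\mu(H\mathbin\triangle C)>\varepsilon$ to $J$, as in the usual double-sample event, without affecting either step. Second, the swap step needs no conditional probabilities: each swap is a $\mu^{\otimes 2m}$-preserving homeomorphism of $X^{2m}$, so $\mu^{\otimes 2m}(J)$ is the integral of the pointwise swap-average of $\one_J$, which your combinatorial bound controls, and Fubini is needed only in the ghost-sample step.
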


The well-behavedness assumption is a measurability condition on certain
sets of samples; see \cite[p.~953]{BEHW} for its definition. It holds, in
particular, for universally separable Borel classes
\cite[Lemma~A1.1, p.~954]{BEHW}.
It is not needed to prove that if $\VC(\cC)=\infty$ then $\cC$ is not PAC learnable.
However, it is needed in their proof that  $\VC(\cC)<\infty$ implies that $\cC$ is consistently PAC learnable.
In the next section, we show that the well-behavedness assumption cannot be removed.

\section{Proof of {Theorem~\ref{thm:mainDisplay}}}
\label{sec:zfc-counterexample}

The classical CH construction of Durst and Dudley
\cite[Proposition~2.2]{DurstDudley}, as modified by Blumer et al.
\cite[Appendix~A1, p.~953]{BEHW}, uses CH to well-order the whole domain so
that every proper initial segment is countable and hence Borel.  To obtain a
ZFC example with Borel concepts, we replace those initial segments by an
increasing chain of Borel null sets.

Let $I=[0,1]$ with its usual topology, let $\lambda$ be Lebesgue measure on
$I$, and let
\[
 \mathcal N=\{A\subseteq I:\lambda^*(A)=0\}
\]
be its null ideal.  Identifying cardinals with initial ordinals, set
\[
 \addN
 =\min\left\{|\mathcal A|:\mathcal A\subseteq\mathcal N,
                  \ \bigcup\mathcal A\notin\mathcal N\right\}.
\]

The cardinal $\addN$ is known as the additivity of the null ideal.
It is well known that $\aleph_1\leq\addN\leq\mathfrak c$, where $\mathfrak c$
is the cardinality of the continuum.  For background on $\addN$, see
\cite{Blass2010}.

In the construction below, we repeatedly use the fact that if $\mu$ is a Borel probability measure on $I$ and $A\subseteq I$ satisfies $\mu^*(A)=c$, then there exists a Borel set $B\supseteq A$ such that $\mu(B)=c$.

\begin{lemma}
\label{lem:adapted-null-tower}
There exist an increasing family
$\langle C_\alpha:\alpha<\addN\rangle$ of Borel subsets of $I$ and a
Borel probability measure $\mu$ on $I$ such that:
\begin{enumerate}[label=\textup{(\roman*)}]
  \item $\mu(C_\alpha)=0$ for every $\alpha<\addN$, and
  \item $\mu^*\left(\bigcup_{\alpha<\addN}C_\alpha\right)=1$.
\end{enumerate}
\end{lemma}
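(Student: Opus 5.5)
We construct the family by transfinite recursion directly from the definition of $\addN$, and then transport Lebesgue measure to a measure concentrated on a suitable set of positive outer measure. Let $\kappa=\addN$ and fix a family $\{N_\xi:\xi<\kappa\}\subseteq\mathcal N$ with $\bigcup_{\xi<\kappa}N_\xi\notin\mathcal N$; such a family exists by definition of $\kappa$.

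\textbf{Step 1: build an increasing chain of Borel null sets.} Define $D_\alpha$ for $\alpha<\kappa$ by recursion. The set $\bigcup_{\xi\le\alpha}N_\xi\cup\bigcup_{\beta<\alpha}D_\beta$ is a union of fewer than $\kappa$ null sets, since $|\alpha|<\kappa$. By minimality of $\kappa$ it is therefore Lebesgue null. Let $D_\alpha$ be a Borel (say $G_\delta$) null set containing it. Then $\langle D_\alpha\rangle$ is increasing, each $D_\alpha$ is Borel with $\lambda(D_\alpha)=0$, and $U:=\bigcup_\alpha D_\alpha\supseteq\bigcup_\xi N_\xi$, so $\lambda^*(U)=c>0$.

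\textbf{Step 2: renormalize to get outer measure one.} Pick a Borel set $B\supseteq U$ with $\lambda(B)=c$, which is possible by the fact recalled above. Define $\mu(E)=\lambda(E\cap B)/c$ for Borel $E$. Then $\mu$ is a Borel probability measure, and $\mu(D_\alpha)\le\lambda(D_\alpha)/c=0$. We may set $C_\alpha=D_\alpha$, or $D_\alpha\cap B$ to stay inside $B$; either choice is Borel and increasing.

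\textbf{Step 3: check (ii).} This is the only step needing care. Suppose $E$ is Borel with $E\supseteq U$. Then $E\cap B\supseteq U$, so $\lambda(E\cap B)\ge\lambda^*(U)=c$, which gives $\mu(E)\ge1$. Hence $\mu^*(U)=1$.

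The main subtle point is the recursion in Step 1, which needs the regularity of $\kappa$. Concretely, it uses that any union of fewer than $\kappa$ null sets is null, and this is exactly the definition of $\addN$. We never need $\kappa$ regular, because we only take unions of size $|\alpha|<\kappa$. Everything else is routine measure theory.
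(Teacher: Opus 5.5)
Your proof is correct and follows essentially the same route as the paper. Both build the chain by a transfinite recursion of Borel null hulls, using only that unions of fewer than $\addN$ null sets are null, and both then renormalize Lebesgue measure on a Borel hull $B\supseteq U$ with $\lambda(B)=\lambda^*(U)$. Your closing remark first says the recursion needs $\kappa$ to be regular and then says it does not; the second statement is the right one.
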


\begin{proof}
Choose
$\langle A_\xi:\xi<\addN\rangle\subseteq\mathcal N$ such that
$\displaystyle \bigcup_{\xi<\addN}A_\xi\notin\mathcal N$.
Such a family exists by the definition of $\addN$.

Let $\lambda$ be the Lebesgue measure on $I$.
By transfinite recursion, choose a family
$\langle C_\alpha:\alpha<\addN\rangle$ of Borel $\lambda$-null sets such that, for every $\alpha<\addN$,
\[
 A_\alpha\cup\bigcup_{\beta<\alpha}C_\beta\subseteq C_\alpha.
\]
At stage $\alpha$, the set on the left is a union of fewer than $\addN$
members of $\mathcal N$, so it is null by the definition of $\addN$ and has
a Borel null superset.  Thus the recursion is possible and the family
$\langle C_\alpha\rangle$ is increasing.  Put
\[
 Y=\bigcup_{\alpha<\addN}C_\alpha
 \qquad\text{and}\qquad
 c=\lambda^*(Y).
\]
Notice that $c>0$ as
\[
  c=\lambda^*(Y)\geq\lambda^*\left(\bigcup_{\alpha<\addN}A_\alpha\right)>0.
\]

Let $G$ be a Borel set such that $Y\subseteq G$ and $\lambda(G)=c$.
Define a Borel probability measure $\mu$ on $I$ by
\[
 \mu(B)=\frac{\lambda(B\cap G)}{c}
 \qquad(B\in\cB(I)).
\]
Then
\[
  \mu(C_\alpha)=\frac{\lambda(C_\alpha\cap G)}{c}
  =\frac{\lambda(C_\alpha)}{c}=0.
\]
Moreover, if $B\supseteq Y$ is Borel,
then
\[
 Y\subseteq B\cap G\subseteq G,
\]
and hence
\[
 c=\lambda^*(Y)
 \leq\lambda(B\cap G)
 \leq\lambda(G)=c.
\]
Thus $\mu(B)=1$ for every Borel superset $B$ of $Y$, and therefore
$\mu^*(Y)=1$.
\end{proof}

We shall also use the following form of Tonelli's theorem.
For a reference, see \cite[252P]{Fremlin2}.

\begin{theorem}[Tonelli's theorem]
\label{thm:tonelli}
Let $\mu$ be a Borel probability measure on $I$, let $m\geq1$, and let
$f:I\times I^m\to[0,\infty]$ be Borel measurable.
Then the map
\[
 x\longmapsto\int_{I^m}f(x,z)\,d\mu^{\otimes m}(z)
\]
is Borel measurable and
\[
 \int_{I^{m+1}}f\,d\mu^{\otimes(m+1)}
 =\int_I\left(\int_{I^m}f(x,z)\,d\mu^{\otimes m}(z)\right)d\mu(x).
\]
\end{theorem}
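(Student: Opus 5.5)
We would prove this form of Tonelli's theorem by the standard bootstrapping from indicators of rectangles to arbitrary nonnegative Borel functions. Two preliminary identifications come first. Since $I$ and $I^m$ are Polish, $\cB(I^{m+1})=\cB(I)\otimes\cB(I^m)$, as already used in \cref{sec:preliminaries}. The product measure $\mu^{\otimes(m+1)}$ coincides with the binary product $\mu\otimes\mu^{\otimes m}$ on this $\sigma$-algebra. Both measures agree on the rectangles $B_0\times B_1\times\cdots\times B_m$, which form a $\pi$-system generating $\cB(I^{m+1})$. By Dynkin's uniqueness theorem for finite measures they are therefore equal. So it suffices to prove the statement for a product $\mu\otimes\nu$ of two Borel probability measures, with $\nu=\mu^{\otimes m}$ on $I^m$.

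Let $\mathcal D$ be the family of Borel sets $E\subseteq I\times I^m$ with two properties. First, the map $x\mapsto\nu(E_x)$ is Borel, where $E_x=\{z:(x,z)\in E\}$ is the section. Second, $(\mu\otimes\nu)(E)=\int_I\nu(E_x)\,d\mu(x)$. Each section $E_x$ is Borel, because $z\mapsto(x,z)$ is continuous. We check that $\mathcal D$ contains every rectangle $B_0\times B_1$: here $\nu(E_x)=\one_{B_0}(x)\,\nu(B_1)$, and both properties are immediate. Next we check that $\mathcal D$ is a Dynkin system. Closure under proper differences $F\setminus E$ with $E\subseteq F$ uses finiteness of $\nu$, which gives $\nu(F_x\setminus E_x)=\nu(F_x)-\nu(E_x)$, together with linearity of the integral. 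Closure under increasing unions uses continuity from below for $\nu$ (pointwise in $x$), the fact that pointwise limits of Borel functions are Borel, and the monotone convergence theorem. By the $\pi$--$\lambda$ theorem, $\mathcal D$ is all of $\cB(I\times I^m)$. This step is the main point where care is needed; it is also the only place where finiteness of the measures enters.

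Finally we extend from indicators to functions. By linearity, the conclusion holds for nonnegative Borel simple functions. A general Borel $f:I\times I^m\to[0,\infty]$ is the pointwise increasing limit of nonnegative simple Borel functions $f_k$. For each fixed $x$, monotone convergence in $z$ shows that $\int f_k(x,z)\,d\nu(z)$ increases to $\int f(x,z)\,d\nu(z)$. Hence the limit map is Borel, being a pointwise limit of Borel maps. A second application of monotone convergence, now in $x$ and on the product, yields the displayed identity for $f$. Combined with the identification $\mu^{\otimes(m+1)}=\mu\otimes\mu^{\otimes m}$, this gives the theorem as stated.
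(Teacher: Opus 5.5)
Your proof is correct. It is the standard textbook argument: identify $\mu^{\otimes(m+1)}$ with the binary product $\mu\otimes\mu^{\otimes m}$ on $\cB(I)\otimes\cB(I^m)=\cB(I^{m+1})$ by uniqueness on rectangles, run a $\pi$--$\lambda$ argument on Borel sets, and pass to nonnegative Borel functions via simple functions and monotone convergence.

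The paper gives no proof of its own here; it quotes the result from \cite[252P]{Fremlin2}. So the only difference is that your version is self-contained and makes explicit two points the citation leaves implicit. First, a two-factor theorem applies to the $(m+1)$-fold product, via $\mu^{\otimes(m+1)}=\mu\otimes\mu^{\otimes m}$. Second, because you work throughout with the uncompleted Borel product $\sigma$-algebra, the section integral $x\mapsto\int_{I^m}f(x,z)\,d\mu^{\otimes m}(z)$ is genuinely Borel. Versions of Fubini--Tonelli for completed product measures give only measurability for the completion of $\mu$.

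One small correction to a side remark: finiteness of the measures is not used only in closing $\mathcal D$ under proper differences. Your uniqueness step $\mu^{\otimes(m+1)}=\mu\otimes\mu^{\otimes m}$ also relies on it, through Dynkin's uniqueness theorem for finite measures. This does not affect the argument.
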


The following is well-known, but we include a proof for completeness.

\begin{lemma}\label{lem:full-outer-powers}
Let $\mu$ be a Borel probability measure on $I$, and let $Y\subseteq I$
satisfy $\mu^*(Y)=1$.
For every $n\geq1$,
\[
 (\mu^{\otimes n})^*(Y^n)=1.
\]
\end{lemma}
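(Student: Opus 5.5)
We argue by induction on $n$, using \cref{thm:tonelli} to pass from $n$ to $n+1$. The case $n=1$ is the hypothesis $\mu^*(Y)=1$. Recall that for a Borel probability measure the outer measure of a set $A$ is the infimum of the measures of Borel supersets of $A$, and that this infimum is attained. Thus it suffices to show that every Borel set $B\subseteq I^{n}$ with $Y^{n}\subseteq B$ satisfies $\mu^{\otimes n}(B)=1$. Equivalently, every Borel set $E=I^n\setminus B$ that is disjoint from $Y^n$ is $\mu^{\otimes n}$-null.

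Assume the claim holds for $n=m$. Let $E\subseteq I^{m+1}$ be Borel with $E\cap Y^{m+1}=\emptyset$, and write points of $I^{m+1}$ as $(x,z)$ with $x\in I$, $z\in I^m$. Apply \cref{thm:tonelli} to $f=\one_E$. The function
\[
 g(x)=\mu^{\otimes m}(E_x),\qquad E_x=\{z\in I^m:(x,z)\in E\},
\]
is Borel and $\mu^{\otimes(m+1)}(E)=\int_I g\,d\mu$. Each section $E_x$ is Borel, being the preimage of $E$ under the continuous map $z\mapsto(x,z)$. For $x\in Y$, the section $E_x$ is disjoint from $Y^m$, because $(x,z)\in E$ with $z\in Y^m$ would put a point of $Y^{m+1}$ in $E$. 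So $I^m\setminus E_x$ is a Borel superset of $Y^m$, and the induction hypothesis gives $\mu^{\otimes m}(E_x)=0$.

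Hence the Borel set $\{x: g(x)>0\}$ is disjoint from $Y$. Its complement $\{x:g(x)=0\}$ is then a Borel superset of $Y$, so it has $\mu$-measure $1$ because $\mu^*(Y)=1$. Therefore $g=0$ $\mu$-almost everywhere, and $\mu^{\otimes(m+1)}(E)=\int_I g\,d\mu=0$, which completes the induction.

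The only delicate point is the measurability bookkeeping, since $Y$ itself need not be measurable. This is handled by the three facts used above: sections of Borel sets are Borel, Tonelli supplies Borel measurability of $g$, and the full outer measure of $Y$ is only ever used against Borel sets. No further obstacle is expected.
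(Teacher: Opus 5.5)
Your proof is correct and is essentially the paper's argument: induction on $n$, Tonelli applied to the indicator of a Borel set to get a Borel section-measure function $g$, and then the full outer measure of $Y$ tested against a Borel superset of the form $\{g=0\}$. The paper instead works directly with a Borel superset $B\supseteq Y^{n+1}$ and the set $\{g=1\}$; passing to the complement $E$ is a purely cosmetic difference.
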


\begin{proof}
Proceed by induction on $n$.  The assertion for $n=1$ is trivial by hypothesis.
Suppose it holds for $n$, and let
$B\subseteq I^{n+1}$ be a Borel set containing $Y^{n+1}$.  For every
$x\in Y$, the Borel section
\[
 B_x=\{z\in I^n:(x)^\frown z\in B\}
\]
contains $Y^n$, so $\mu^{\otimes n}(B_x)=1$ by the induction hypothesis.  Set
$g(x)=\mu^{\otimes n}(B_x)$.  By \Cref{thm:tonelli}, $g$ is Borel measurable, so
\[
 E=\{x\in I:g(x)=1\}
\]
is Borel, contains $Y$, and therefore has $\mu$-measure one.  Applying
\Cref{thm:tonelli} to $\one_B$ gives
\[
 1\geq\mu^{\otimes(n+1)}(B)
 =\int_I g(x)\,d\mu(x)
 \geq\int_E g(x)\,d\mu(x)
 =\mu(E)=1.
\]
Thus $\mu^{\otimes(n+1)}(B)=1$.  Taking the infimum over Borel supersets $B$
proves the claim.
\end{proof}

Now we are ready to prove the main result of this paper.

\begin{proof}[Proof of \Cref{thm:mainDisplay}]
Fix an increasing family
$\langle C_\alpha:\alpha<\addN\rangle$ of Borel subsets of $I$ and a
Borel probability measure $\mu$ on $I$ as in \Cref{lem:adapted-null-tower}.
Let
\[
 Y=\bigcup_{\alpha<\addN}C_\alpha \qquad \text{and} \qquad \cC=\{\varnothing,I\}
       \cup\{C_\alpha:\alpha<\addN\}.
\]
The class $\cC$ is a Borel concept class that forms a chain containing both $\varnothing$ and
$I$.  Hence it
shatters every singleton and no two-point set, so $\VC(\cC)=1$.

For a labeled sample
$\sigma=((x_1,b_1),\ldots,(x_n,b_n))$, let
\[
 P(\sigma)=\{x_i:b_i=1\}.
\]
For every nonempty finite $P\subseteq Y$, set
\[
 \rho(P)=\min\{\alpha<\addN:P\subseteq C_\alpha\}.
\]
The set on the right is nonempty because $P$ is finite, the family is
increasing, and its union is $Y$.  Define
\[
 L_n(\sigma)=
 \begin{cases}
  \varnothing,&P(\sigma)=\varnothing,\\
  C_{\rho(P(\sigma))},
    &\varnothing\neq P(\sigma)\subseteq Y,\\
  I,&P(\sigma)\nsubseteq Y.
 \end{cases}
\]

\begin{claim}
The learning rule $L=(L_n)_{n\geq1}$ is consistent for $\cC$.
\end{claim}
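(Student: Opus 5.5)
We verify consistency directly from the definition. Fix a target $C\in\cC$, $n\geq1$ and $s=(x_1,\ldots,x_n)\in I^n$, and put $\sigma=S_C(s)$, so that $P(\sigma)=\{x_i: x_i\in C\}=C\cap\{x_1,\ldots,x_n\}$. Write $H=L_n(\sigma)$. We must show that $x_i\in H\iff x_i\in C$ for each $i$, and we do this by splitting into the three cases in the definition of $L_n$.

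\emph{Case $P(\sigma)=\varnothing$.} Here $H=\varnothing$. No sample point lies in $C$, so both sides of the equivalence are false for every $i$.

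\emph{Case $P(\sigma)\nsubseteq Y$.} Here $H=I$. Some sample point $x_i$ lies in $C\setminus Y$. Since every $C_\alpha\subseteq Y$ and $\varnothing$ has no points, the only possibility is $C=I$. Hence every $x_i$ lies in both $C$ and $H$.

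\emph{Case $\varnothing\neq P(\sigma)\subseteq Y$.} Here $H=C_{\rho}$ with $\rho=\rho(P(\sigma))$. The target is either $C=I$ or $C=C_\beta$ for some $\beta<\addN$; it cannot be $\varnothing$, because $P(\sigma)\neq\varnothing$.

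\begin{itemize}
\item[(a)] Points labeled $1$ lie in $H$: $P(\sigma)\subseteq C_\rho$ by the definition of $\rho$ as a minimum.
\item[(b)] If $C=C_\beta$, then $P(\sigma)\subseteq C_\beta$, so $\rho\leq\beta$. Since the family is increasing, $C_\rho\subseteq C_\beta$. Therefore any $x_i\notin C$ satisfies $x_i\notin C_\rho=H$.
\item[(c)] If $C=I$, every sample point is labeled $1$. Thus $\{x_1,\ldots,x_n\}=P(\sigma)\subseteq C_\rho$, and every point lies in both $C$ and $H$.
\end{itemize}

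The only step needing care is (b). It relies on the minimality of $\rho$ together with monotonicity of the chain, which gives $C_{\rho}\subseteq C_\beta$. It also relies on the observation that $C\setminus Y\neq\varnothing$ forces $C=I$; this is what justifies the third branch of the definition. The rest is immediate bookkeeping.

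Finally, $\rho(P)$ is well defined. For a nonempty finite $P\subseteq Y$, each point lies in some $C_{\alpha}$, and the maximum of these finitely many indices works by monotonicity. So $L_n$ is a well-defined map into $\cC$, as the paper already noted.
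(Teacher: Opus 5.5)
Your proof is correct and uses the same ingredients as the paper. These are minimality of $\rho$ combined with monotonicity of the chain, giving $C_{\rho}\subseteq C_\beta$, and the fact that only the target $I$ can produce a positively labeled point outside $Y$. The only difference is bookkeeping: you split by which branch of $L_n$ fires, whereas the paper splits by the target ($\varnothing$, $C_\beta$, $I$), and both amount to the same verification.
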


\begin{proof}[Proof of the claim]
Fix $n\geq1$, a target $C\in\cC$, and a sample $s\in I^n$.  We verify that
$L_n(S_C(s))$ agrees with $C$ on every point of $s$.

If $C=\varnothing$, then every label is zero.  Thus
$P(S_C(s))=\varnothing$, and the learner returns $\varnothing$, which agrees
with all the labels.

Suppose that $C=C_\beta$ for some $\beta<\addN$.  Every positively labeled
sample point belongs to $C_\beta$. If there are no such points, $L_n(S_C(s))=\varnothing$ and hence agrees with the sample.  Otherwise,
\[
 \varnothing\neq P(S_C(s))\subseteq C_\beta\subseteq Y,
\]
so $\rho(P(S_C(s)))\leq\beta$.  Consequently,
\[
 P(S_C(s))\subseteq C_{\rho(P(S_C(s)))}\subseteq C_\beta.
\]
The first inclusion shows that the output contains every positively labeled
sample point.  The second shows that it contains no negatively labeled
sample point, since every such point lies outside the target $C_\beta$.

Finally, suppose that $C=I$.  Then every sample point has label one, so
$P(S_C(s))$ is the set of all points occurring in $s$.  If this set is
contained in $Y$, the learner returns $C_{\rho(P(S_C(s)))}$, which contains
all the sample points.  Otherwise it returns $I$, which also contains all of
them.  Thus the output agrees with every label in this case as well.
\end{proof}

Now we show that the rule is not PAC.
Consider $C=I$ and the Borel probability measure $\mu$.  Every
$C_\alpha$ is $\mu$-null, and the learning rule returns $I$ if and only if at least
one sample point lies outside $Y$.  If $s\in Y^n$, then
$L_n(S_I(s))=C_\alpha$ for some $\alpha<\addN$, and hence
$r_{I,\mu,n}(s)=1$ because $\mu(C_\alpha)=0$.  Otherwise,
$L_n(S_I(s))=I$, and hence $r_{I,\mu,n}(s)=0$.  Therefore,
\[
 r_{I,\mu,n}(s)
 =\mu\bigl(L_n(S_I(s))\mathbin\triangle I\bigr)
 =\one_{Y^n}(s).
\]
For every $0<\varepsilon<1$, it follows that $B_{I,\mu,n}(\varepsilon)=\{s:r_{I,\mu,n}(s)>\varepsilon\}=Y^n$, and
\Cref{lem:full-outer-powers} gives, for every $n\geq1$,
\[
 (\mu^{\otimes n})^*(B_{I,\mu,n}(\varepsilon))=(\mu^{\otimes n})^*(Y^n)=1.
\]
Taking, for instance, $\varepsilon=\delta=1/2$ contradicts Definition~\ref{def:pac}.
Thus, the rule is not PAC, and the proof of \Cref{thm:mainDisplay} is complete.
\end{proof}

\section{Conclusion}

Theorem~\ref{thm:mainDisplay} shows that the passage from finite VC dimension to consistent PAC learnability is not purely combinatorial.  Some measure-theoretic regularity is needed to connect the trace bounds supplied by finite VC dimension to probabilities of the bad-sample events.  The
assumption that each individual concept is Borel does not suffice.
We note that in our construction, we do not guarantee that the class is not PAC learnable: the theorem exhibits a consistent learning rule that fails to be PAC, but does not rule out the existence of another PAC learning rule for the same class.

Earlier counterexamples relied on additional set-theoretic assumptions, most
notably the Continuum Hypothesis, and therefore showed only that such a
failure is consistent with ZFC. Our construction removes this dependence by
producing a counterexample in ZFC alone. Consequently, ZFC refutes the
assertion that every Borel concept class of finite VC dimension is
consistently PAC learnable.

\section*{Acknowledgments}

The authors gratefully acknowledge financial support from the S\~ao Paulo
Research Foundation (FAPESP) through grants 2025/07302-0 (Vinicius de
Oliveira Rodrigues) and 2025/09425-1 (Gabriel Fernandes).

\section*{Statements and declarations}

\noindent\textbf{Competing interests.}
The authors declare no competing interests.

\smallskip
\noindent\textbf{Data availability.}
Data sharing is not applicable to this article because no datasets were
generated or analyzed.

\smallskip
\noindent\textbf{Use of generative artificial intelligence.}
OpenAI's GPT-5.6 Sol, accessed through Codex, was used for brainstorming and
testing ideas, producing first drafts of proofs, and assisting with
bibliographic searches, proofreading, LaTeX editing, and manuscript revision.
All AI-generated output and suggested references were reviewed, hand-edited,
and revised by the authors, who take full responsibility for the final
manuscript.

\bibliographystyle{amsplain}
\bibliography{includes/references}

\end{document}